\theoremstyle{plain}
\newtheorem{theorem}{Theorem}[section]
\newtheorem{thm}{Theorem}
\newtheorem{proposition}[theorem]{Proposition}
\newtheorem{lemma}[theorem]{Lemma}
\theoremstyle{definition}
\newcommand{\cA}{\mathcal{A}}
\newcommand{\cQ}{\mathcal{Q}}
\newcommand{\cT}{\mathcal{T}}
\newcommand{\eps}{\varepsilon}
\newcommand{\vol}{\mathrm{Vol}}
\newcommand{\diam}{\mathrm{diam}}
\title{Spanning trees of bounded degree in random geometric graphs}
\author[1]{Michael Anastos}
\author[2]{Sahar Diskin}
\author[3]{Dawid Ignasiak}
\author[3]{Lyuben Lichev}
\author[4]{Yetong Sha}
\affil[1]{Institute of Science and Technology Austria (ISTA), 3400 Klosterneurburg, Austria}
\affil[2]{School of Mathematical Sciences, Tel Aviv University, 6997801 Tel Aviv, Israel}
\affil[3]{Institute of Statistics and Mathematical Methods in Economics, Technical University of Vienna, A-1040 Vienna, Austria}
\affil[4]{Institute for Advanced Study in Mathematics, Harbin Institute of Technology, 150001 Harbin, China}
\date{\today}
\begin{document}

\maketitle

\begin{abstract}
We determine the sharp threshold for the containment of all $n$-vertex trees of bounded degree in random geometric graphs with $n$ vertices. 
This provides a geometric counterpart of Montgomery's threshold result for binomial random graphs, and confirms a conjecture of Espuny D\'iaz, Lichev, Mitsche, and Wesolek. Our proof is algorithmic and adapts to other families of graphs, in particular graphs with bounded genus or tree-width.
\end{abstract}

\section{Introduction}
Finding particular structures in random graphs is a central topic in probabilistic combinatorics. The existence of a perfect matching~\cite{BF85,ER66}, a Hamilton cycle~\cite{AKS85,Bol84,BF85,KS83,Kor76,Pos76} or an $H$-factor \cite{AY93,JKV08,LR91,Ruc92} are classic properties which have been extensively studied and are by now quite well understood. An important aspect of this study has been the analysis of spanning trees in the binomial random graph $G(n,p)$ and, in particular, determining the thresholds for the containment of certain spanning trees~\cite{Gle13,HKS12,KLW16,Mon14b,Mon14a}.
A major breakthrough in this direction came with the work of Montgomery~\cite{Mon19} who showed that, for every $\Delta\ge 2$, there is $C = C(\Delta)$ such that, when $p\ge C(\log n)/n$, typically $G(n,p)$ contains copies of \emph{all} trees on $n$ vertices with maximum degree at most $\Delta$.
This property is often referred to as \emph{universality} for the containment of bounded-degree spanning trees.
It is widely believed~\cite{Gle13,Mon19} that the constant $C$ in the result of Montgomery should not depend on the degree $\Delta$, and that, in fact, the said result should hold for any constant $C>1$. Note that such a result would be optimal since $G(n,p)$ becomes connected around $p = (\log n)/n$. 
It is also worth mentioning the result of Johannesen, Krivelevich and Samotij \cite{JKS13} who showed that, for $\Delta = \Delta(n)$ suitably growing with $n$, every graph on $n$ vertices satisfying certain expansion criterion contains all $n$-vertex trees with maximum degree at most $\Delta$. In a related direction, Bhatt, Chung, Leighton and Rosenberg~\cite{BCLR89} constructed graphs on $n$ vertices and $O(n)$ edges containing all $n$-vertex trees with bounded maximum degree, and  graphs on $n$ vertices and $O(n\log n)$ edges containing all planar graphs with bounded degree.

In parallel with the developments in binomial random graphs discussed above, spanning structures in other classic random graph models were well studied.
Some of the most notable advances concern the \emph{random geometric graph} $G_d(n,r)$ where $n$ points are embedded in $[0,1]^d$ uniformly at random and independently, and a pair of points forms an edge if the Euclidean distance between them is at most $r$. 
A function $r_c = r_c(n)$ is said to be a \emph{sharp threshold} for a monotone property $P$ in the random geometric graph if, for every $\eps > 0$,
\[\lim_{n\to \infty} \mathbb P(G_d(n,r)\text{ satisfies }P) =
\begin{cases}
0, &\text{if } r = r(n)\le (1-\eps) r_c,\\
1, &\text{if } r = r(n)\ge (1+\eps) r_c.
\end{cases}\]
Sharp thresholds for the containment of perfect matchings~\cite{Pen99}, Hamilton cycles~\cite{BBKMW11,DMP07,MPW11} and balanced $n$-vertex trees~\cite{E-DLMW24} in the random geometric graph have been established. 

In this work, we establish a sharp threshold phenomenon for the containment of \emph{all} trees on $n$ vertices and bounded degree in $G_d(n,r)$.
In particular, we confirm \cite[Conjecture~14]{E-DLMW24} predicting the expression of the sharp threshold for $n$-vertex tree universality, and provide a geometric counterpart to the result of Montgomery~\cite{Mon19}.

Let $\mathcal{T}(n,\Delta)$ be the family of trees on $n$ vertices and maximum degree at most $\Delta$. We say that a graph $G$ is $\mathcal{T}(n,\Delta)$-universal if every $T\in \mathcal{T}(n,\Delta)$ is a subgraph of $G$ (not necessarily induced).

\begin{thm}\label{thm:1}
Fix $d\ge 1$ and $\Delta=\Delta(n)$ with $\Delta=n^{o(1)}$.
Then, the function 
\begin{equation}\label{eq:rc}
r_c = r_c(n, d, \Delta) := \frac{\sqrt{d}\log(\Delta-1)}{2\log n}
\end{equation}
is a sharp threshold for $G_d(n,r)$ to be $\mathcal{T}(n,\Delta)$-universal. 
More precisely, for all $\Delta=\Delta(n)$ as above,
\[\lim_{n\to \infty} \mathbb P(G_d(n,r)\text{ is }\mathcal{T}(n,\Delta)\text{-universal}) =
\begin{cases}
0, &\text{if } r = r(n)\le \left(1-\tfrac{\log (\Delta-1)}{\log n}\right) r_c,\\
1, &\text{if } r = r(n)\ge \left(1+\tfrac{100d\log (\Delta \log n) }{\log n}\right) r_c.
\end{cases}\]
\end{thm}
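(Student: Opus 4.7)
The proof naturally splits into the 0-statement and the 1-statement, which I would tackle via a geometric obstruction and an algorithmic embedding scheme, respectively.

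\textbf{0-statement.} I would take $T^*$ to be a rooted balanced $(\Delta-1)$-ary tree on $n$ vertices, whose depth satisfies $h = (1+o(1))\log n/\log(\Delta-1)$. Any embedding $\phi: V(T^*) \to V(G_d(n,r))$ maps adjacent vertices to vertices at Euclidean distance at most $r$; hence the image of $\phi$ lies in the Euclidean ball $B(\phi(\text{root}), hr)$. Since $|V(T^*)| = n$, $\phi$ is necessarily bijective onto $V(G_d(n,r))$, so all $n$ random points lie in a single ball of radius $hr$ centered at one of them. However, for $r \leq (1 - \log(\Delta-1)/\log n)r_c$, a direct computation yields $hr \leq \sqrt{d}/2 - r_c$, while with high probability the smallest ball enclosing the $n$ random points has radius $\sqrt{d}/2 - O((\log n/n)^{1/d})$, which strictly exceeds $\sqrt{d}/2 - r_c$ because $r_c = \omega((\log n/n)^{1/d})$ for $\Delta \geq 3$. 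Thus $T^*$ is not a subgraph of $G_d(n,r)$ with high probability.

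\textbf{1-statement.} For $r$ slightly above $r_c$, I would tile $[0,1]^d$ with a grid of cubes of side length $s = \Theta(r/\sqrt{d})$, chosen so that any two points in adjacent cubes are joined by an edge of $G_d(n,r)$. With high probability, each cube contains $(1\pm o(1)) n s^d$ points, adjacent cubes are fully connected, and a small fraction of the points in every cube can be set aside as a \emph{reservoir} for later use. To embed a given $T \in \mathcal{T}(n,\Delta)$, I would rely on a standard tree-structural dichotomy: either $T$ contains a linear-sized family of vertex-disjoint bare paths of polylogarithmic length, or it admits a small \emph{skeleton} $S \subseteq V(T)$ with $|S| = o(n)$ whose complement in $T$ consists of such long bare paths. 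The embedding then proceeds in two stages. First, the branching skeleton $S$ is embedded greedily via a BFS-like exploration, started near the center of $[0,1]^d$ and expanding outward, so that branching vertices of degree close to $\Delta$ always find enough unused neighbors in nearby cubes. Second, the long bare paths are inserted via local path-rerouting arguments, exploiting that two prescribed endpoints at moderate geometric distance can be joined by a path of essentially any prescribed length using points in the intermediate cubes and the reservoir. The choice of $r$ above $r_c$ is precisely what guarantees that a $(\Delta-1)$-ary BFS tree of depth $h \approx \log n/\log(\Delta-1)$ can geometrically cover all of $[0,1]^d$.

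\textbf{Main obstacle.} The main difficulty is the coupled bookkeeping during the greedy embedding: one must maintain, at every step, enough unused vertices in each cube to accommodate both future branching and future path completions, while preserving the global geometric reach of the skeleton. I expect the delicate calibration of bare-path lengths to prescribed tree segments to be particularly technical, as it requires an interplay between local path-flexibility arguments within a bounded cluster of cubes and the global BFS structure. The $O(\log(\Delta \log n)/\log n)$ slack above $r_c$ in the 1-statement is precisely what allows the argument to absorb, simultaneously, the reservoir depletion near branching vertices and the Chernoff-type fluctuations in the number of points per cube.
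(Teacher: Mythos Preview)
Your 0-statement is essentially the paper's argument: both compare the diameter $\approx 2\log n/\log(\Delta-1)$ of a near-balanced $(\Delta-1)$-ary tree with the graph diameter $\approx \sqrt{d}/r$ of $G_d(n,r)$.

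For the 1-statement, however, your plan diverges from the paper and carries a genuine gap. Your bare-path dichotomy is misstated (the two alternatives you list are not exclusive and do not exhaust all trees), and more seriously it breaks down for the complete $(\Delta-1)$-ary tree with $\Delta\ge 4$: this tree has \emph{no} bare paths of length $\ge 2$, so your ``skeleton'' $S$ is all of $T$, and you are thrown back entirely on the BFS step. But an outward BFS from the centre does not obviously achieve the required load-balancing: at BFS depth $j$ the tree has $(\Delta-1)^j$ vertices while the cells reachable in $j$ hops have only polynomially growing total capacity until $j$ is within $O(1)$ of $h$, and you give no mechanism for steering the exponentially growing levels so that every cell---including the $2^d$ corner cells---receives its fair share of $\sim n s^d$ points. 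Path-rerouting, which is tailored to expanders, does not address this.

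The paper's upper-bound argument avoids the bare-path machinery altogether. It uses an elementary tree-splitting lemma (any tree of maximum degree $\Delta$ can be cut, by deleting a few edges, into subtrees each of size in $[m/(\Delta+1),m]$) with $m=\Theta(n/s^d)$, producing $k=\Theta(s^d)$ subtrees $T_1,\ldots,T_k$ joined by $k-1$ ``anchor'' edges. All anchors are embedded in the central cell; then, for each $T_t$, its first $\eta\approx s/4$ levels (measured from its anchors) are laid along a chain of small balls on the straight segment from the centre to a designated target cell $q_{i_t}$, and all remaining vertices of $T_t$ are dumped into $q_{i_t}$ (with overflow to one fixed neighbouring cell). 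The key count is that the total number of vertices within $\eta$ of some anchor is $O(k\Delta(\Delta-1)^{\eta})=n^{1-\Omega(\eps)}$, which is negligible; the $\eps$-slack above $r_c$ is spent exactly here. Because each subtree is pre-assigned to a cell of matching capacity, no load-balancing analysis is needed.
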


Let us note several important distinctions between \Cref{thm:1} and the result of Montgomery for binomial random graphs. 
First, the maximum degree in \Cref{thm:1} is allowed to grow with the size of the graph.
Second, \Cref{thm:1} exhibits a sharp threshold for universality: a phenomenon which has been conjectured but not yet confirmed for binomial random graphs.
Third, while universality holds for $p$ which is a constant factor away from the threshold for connectivity in $G(n,p)$, the threshold exhibited in \Cref{thm:1} is much larger than the connectivity threshold in $G_d(n,r)$, which is of order $\Theta((\log n)^{1/d}/n^{1/d})$~\cite{Pen03}.
This phenomenon is due to the different obstructions to embedding $n$-vertex trees: the binomial random graph is a good expander and the main obstruction there is given by vertices of small degree, whereas the random geometric graph exhibits good local clustering and large diameter.
In fact, combining the fact that the diameter of the host graph is dominated by the diameter of each of its spanning trees, and that $\cT(n,\Delta)$ contains a copy of a truncated $\Delta$-regular tree, suffices to deduce that the expression in~\eqref{eq:rc} is a lower bound for the threshold, see \Cref{sec:proof}.
We also observe that the condition $\Delta\ge 3$ in \Cref{thm:1} is necessary. Indeed, the threshold for Hamiltonicity of the random geometric graph asymptotically coincides with the threshold for connectivity mentioned above~\cite{BBKMW11,DMP07,MPW11}.

Our proof is algorithmic and is based on a tree-decomposition lemma which states that every tree with given maximum degree can be divided into a suitable number of trees of comparable sizes by removing a few edges from it (see \Cref{lem:divide}). 
By substituting \Cref{lem:divide} with results on balanced separation of graphs with suitably bounded tree-width or genus, our proof readily adapts to the latter two families. 
In the first case, we derive a sharp threshold for the property of finding all graphs on $n$ vertices with maximum degree at most $\Delta =n^{o(1)}$ and tree-width $t = t(n) = n^{o(1)}$.
In the second case, while being unable to provide a sharp threshold, we still ensure that typically all graphs on $n$ vertices with maximum degree at most $\Delta =n^{o(1)}$ and genus $g = g(n) \le n^{1-\eps}$ can be found simultaneously in $G_d(n,(1+o(1))r_c/\eps)$. For more details, see \Cref{sec: conc}.

\paragraph{Plan of the paper.}
In Section \ref{sec:proof}, we prove Theorem \ref{thm:1} by first establishing the lower bound, and then by showing the upper bound. In Section \ref{sec: conc}, we discuss our results and remaining open problems.

\paragraph{Notation.}
For a positive integer $n$, we write $[n] = \{1,\ldots,n\}$. Our asymptotic notation is standard and used exclusively with respect to $n$. We say that a sequence of events $(A_n)_{n}$ holds \textit{with high probability} if $\lim_{n\to\infty}\mathbb{P}(A_n)=1$. Rounding is omitted whenever it is irrelevant for the argument.

For a graph $G$, we write $V(G)$ for its vertex set, $E(G)$ for its edge set and $\diam(G)$ for its diameter.
Given a function $w:V(G)\to \mathbb R$ and a set $S\subseteq V(G)$, we write $w(S)=\sum_{v\in S}w(v)$.
We sometimes abuse notation and identify $G$ with its vertex set; in particular, $w(G)$ stands for $w(V(G))$.

Given a region $A\subseteq [0,1]^d$, we write $\vol_d(A)$ or simply $\vol(A)$ to denote the $d$-dimensional volume of $A$. 
For every centrally symmetric set $S\subseteq \mathbb R^d$ (such as a hypercube or a ball), we denote by $c(S)$ the centre point of $S$.
In what follows, $\left\|\cdot\right\|$ stands for the Euclidean distance (where the dimension is omitted for readability).

For the sake of clarity, we call the vertices of $G_d (n, r)$ landing in $[0,1]^d$ \emph{points} and reserve the term \emph{vertices} for the vertices of trees.

\section{\texorpdfstring{Proof of \Cref{thm:1}}{}}\label{sec:proof}

\paragraph{Lower bound.} Denote by $h = h(n,\Delta)$ the unique integer such that
\[\frac{(\Delta-1)^h-1}{\Delta-2} = \sum_{i=0}^{h-1} (\Delta-1)^i < n\le \sum_{i=0}^{h} (\Delta-1)^i = \frac{(\Delta-1)^{h+1}-1}{\Delta-2}.\]
In particular,
\begin{equation}\label{eq:bd h}
h\le \frac{\log((\Delta-2)n+1)}{\log(\Delta-1)}.
\end{equation}

Denote by $S$ an $n$-vertex rooted tree of maximum degree at most $\Delta$ and of height $h$ such that, for every $i\in [0,h-1]$, $S$ contains exactly $(\Delta-1)^i$ vertices at distance $i$ from its root. In particular, $\diam(S) \le 2h$.

Further, for every $d\ge 1$, the hypercube $[0,n^{-1/2d}]^d$ with volume $n^{-1/2}$ contains a vertex of $G_d(n,r)$ with probability $1 - (1-n^{-1/2})^n = 1-o(1)$, and the same holds for the hypercube $[1-n^{-1/2d},1]^d$.
In particular, for every $r\in (0,1]$, with high probability $\diam(G_d(n,r))\ge (1- 2n^{-1/2d})\sqrt{d}/r$.

Thus, combining the latter observation, the relation $\diam(G_d(n,r)) \le \diam(S) \le 2h$ and~\eqref{eq:bd h}, we obtain that, in order to find $S$ as a subtree of $G_d(n,r)$, with high probability, it is necessary that
\[(1-2n^{-1/2d})\frac{\sqrt{d}}{r}\le 2\frac{\log((\Delta-2)n+1)}{\log(\Delta-1)}\iff r\ge (1-2n^{-1/2d})\frac{\sqrt{d} \log(\Delta-1)}{2\log((\Delta-2)n+1)}.\]
Since the latter inequality implies that $r\ge (1-\tfrac{\log(\Delta-1)}{\log n})r_c$ for sufficiently large $n$, the lower bound follows. 

\paragraph{Upper bound.} We start with a lemma allowing us to divide vertex-weighted trees of bounded maximum degree into subtrees of comparable total weight by cutting through some of the edges. 
Although the proof idea is not new, see e.g.\ \cite[Proposition~4.5]{KN06}, we provide a short proof for completeness. 

\begin{lemma}\label{lem:divide}
Fix an integer $\Delta\ge 2$, real numbers $m > 0$ and $m_0 = m/(\Delta+1)$, a tree $T$ with maximum degree at most $\Delta$ and a weight function $w: V(T)\to (0, m_0]$ with $w(T)\ge m_0$. 
Then, there exist vertex-disjoint trees $T_1,\ldots,T_k\subseteq T$, such that $V(T)=\bigcup_{i\in [k]}V(T_i)$ and, for every $i\in [k]$, $w(T_i)\in [m_0,m]$.
\end{lemma}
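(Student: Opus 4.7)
The plan is to proceed by induction on $|V(T)|$, extracting one valid part at a time via a rooting argument. I would root $T$ at an arbitrary vertex $r$ and, for each $v \in V(T)$, write $T_v$ for the subtree rooted at $v$. The key selection step is to pick $v$ to be a \emph{deepest} vertex satisfying $w(T_v) \ge m_0$; such a $v$ exists because $w(T_r) = w(T) \ge m_0$, and deepness forces every child $c$ of $v$ to satisfy $w(T_c) < m_0$.

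From here I would split into two cases. If $v = r$, then $r$ has at most $\Delta$ children in $T$, which together with $w(r)\le m_0$ gives
\[ w(T) = w(r) + \sum_{c\text{ child of }r} w(T_c) \le m_0 + \Delta m_0 = m, \]
so $T$ itself already serves as the single output part. If $v \ne r$, then $v$ has at most $\Delta - 1$ children, so the analogous computation yields $w(T_v) \le m_0 + (\Delta - 1)m_0 = \Delta m_0 = m - m_0$. Set $T' := T \setminus V(T_v)$, which is still a tree because we have removed a subtree hanging off the parent of $v$. If $w(T') \ge m_0$, I would apply the induction hypothesis to $T'$ (which has strictly fewer vertices) and append $T_v$ as an additional part. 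If instead $w(T') < m_0$, I would output the single part $T$ itself, whose weight satisfies $w(T_v) + w(T') < (m-m_0) + m_0 = m$ and $w(T_v)+w(T') \ge w(T_v) \ge m_0$, so it lies in $[m_0,m]$ as required.

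The main obstacle, and indeed the only quantitatively delicate point, is handling this second sub-case: after cutting off $T_v$, the leftover $T'$ may have weight below $m_0$, and the constraint that every part have weight at least $m_0$ then forces us to merge $T'$ back into $T_v$. This is exactly where the precise hypothesis $m_0 = m/(\Delta+1)$ enters: when $v \ne r$, the improved bound $w(T_v) \le m - m_0$ — saved by the fact that a non-root $v$ has one fewer child than the root — leaves an $m_0$-wide buffer, matching exactly the maximum possible weight of a sub-$m_0$ leftover. Any smaller ratio between $m_0$ and $m$ would break this balance; everything else in the proof is routine bookkeeping within the induction.
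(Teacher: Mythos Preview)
Your proof is correct and complete; the induction, the deepest-heavy-subtree selection, and the two sub-cases all check out, and you are right that the precise ratio $m_0=m/(\Delta+1)$ is exactly what makes the ``merge back'' sub-case close.

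The paper's argument is genuinely different. Instead of rooting and peeling off one part at a time, it takes a centroid-type vertex $v$ minimising the largest component of $T\setminus\{v\}$, cuts the edge from $v$ to its heaviest branch, and shows that \emph{both} resulting pieces already have weight at least $m_0$; it then recurses on each (phrased as a minimal-counterexample contradiction). Your approach is arguably more directly algorithmic---it yields the parts one by one and makes the upper bound $w(T_v)\le m-m_0$ explicit---at the cost of an extra sub-case when the leftover $T'$ drops below $m_0$. The paper's centroid split avoids that sub-case entirely, since both sides automatically have weight $\ge m_0$, but it needs the short balancing argument comparing $w(T_u)$ and $w(T_v)$. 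Both are standard tree-decomposition tricks and of comparable length.
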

\begin{proof}
Suppose towards contradiction that $T$ is a counterexample with the minimum number of vertices. Note that $m<w(T)$. In extension, for every $v\in V(T)$, we have that $w(v)+\Delta m_0\leq (\Delta+1)m_0=m < w(T)$.

Consider a vertex $v$ in $T$ which minimises $\max\{w(T'): T'\text{ is a connected component of }T\setminus \{v\}\}$. Further, denote by $T_v$ a tree achieving the said maximum. Let $u$ be the unique neighbour of $v$ in $T_v$, and let $T_u=T\setminus T_v$.
By the choice of $v$, the tree of maximal weight in the forest $T\setminus \{u\}$ must have weight at least $w(T_v)>w(T_v\setminus \{u\})$ and, therefore, it must be $T_u$. Thus, $w(T_v)\leq w(T_u)$. Moreover, since $T_v$ achieves the said maximum and since $v$ has at most $\Delta$ neighbours in $T$, we have that 
\[w(T_v)\ge \frac{w(T)-w(v)}{\Delta} \geq \frac{(w(v)+\Delta m_0)-w(v)}{
\Delta
}=m_0.\]
Thus, each of $T_u,T_v$ has weight strictly larger than $m_0$. As $T$ is a minimal counterexample, we have that, for each $T'\in\{T_u,T_v\}$, there exist disjoint trees $T_1',\ldots,T_k'\subseteq T'$ partitioning the vertices of $T'$ and such that, for every $i\in [k]$, $w(T_i')\in [m_0,m]$. Taking these two sets of trees together shows that $T$ satisfies the statement of  \Cref{lem:divide}. This gives a contradiction.
\end{proof}

Our analysis utilises the fact that the number of points in large regions of $[0,1]^d$ is well concentrated. 
This fact is a consequence of the following lemma for $p=1$; the purpose of the general statement will become clear later.
The proof of the lemma follows from a standard application of the Chernoff bound and is therefore omitted.

\begin{lemma}\label{lem:EquiDistributionOfPoints}
Fix a real number $p\in (0,1]$, an integer $n\ge 10/p$ and a set $A\subseteq [0,1]^d$ of volume $a\ge 10(np)^{-1}$. Consider a random set $P$ containing every point in $A\cap V(G_d(n,r))$ with probability $p$, independently for different points. 
Then, with probability at least $1-2\exp(-(anp)^{1/3}/3)$, $|P| \in [anp - (anp)^{2/3}, anp + (anp)^{2/3}]$.
\end{lemma}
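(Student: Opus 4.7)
The plan is to express $|P|$ as a sum of independent $\{0,1\}$ random variables and then apply a standard Chernoff-style concentration inequality. Concretely, each of the $n$ points of $G_d(n,r)$ lands in $A$ independently with probability $\vol(A)=a$, and conditionally on landing in $A$, it is included in $P$ independently with probability $p$. Hence $|P|$ is the sum of $n$ i.i.d.\ Bernoulli random variables with parameter $ap$, so $|P|\sim \mathrm{Bin}(n,ap)$ with mean $\mu:=anp$.

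Next, I would invoke the two-sided multiplicative Chernoff bound
\[\mathbb{P}\bigl(\bigl||P|-\mu\bigr|\ge \lambda\bigr)\le 2\exp\!\left(-\frac{\lambda^2}{3\mu}\right)\qquad\text{for all }\lambda\in[0,\mu],\]
and apply it with $\lambda:=\mu^{2/3}=(anp)^{2/3}$. Then $\lambda^2/(3\mu)=\mu^{1/3}/3=(anp)^{1/3}/3$, giving exactly the tail probability claimed in the statement.

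The only routine verification needed is that $\lambda\le \mu$, i.e.\ $(anp)^{2/3}\le anp$, which reduces to $\mu\ge 1$. This is ensured by the hypotheses $a\ge 10(np)^{-1}$ and $n\ge 10/p$, which in fact yield $\mu=anp\ge 10$. With this check in place, the conclusion follows immediately. There is no genuine obstacle here; the only reason for the hypothesis $a\ge 10(np)^{-1}$ is to make $\mu$ large enough that the Chernoff bound gives a non-trivial exponential tail with the prescribed rate $(anp)^{1/3}/3$.
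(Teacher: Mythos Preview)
Your proposal is correct and is precisely the ``standard application of the Chernoff bound'' that the paper invokes (the paper omits the proof entirely). The identification $|P|\sim\mathrm{Bin}(n,ap)$ and the choice $\lambda=(anp)^{2/3}$ in the two-sided Chernoff bound $\mathbb{P}(|X-\mu|\ge\lambda)\le 2\exp(-\lambda^2/(3\mu))$ yield exactly the stated tail, with the hypothesis $a\ge 10(np)^{-1}$ guaranteeing $\mu\ge 10\ge 1$ so that $\lambda\le\mu$.
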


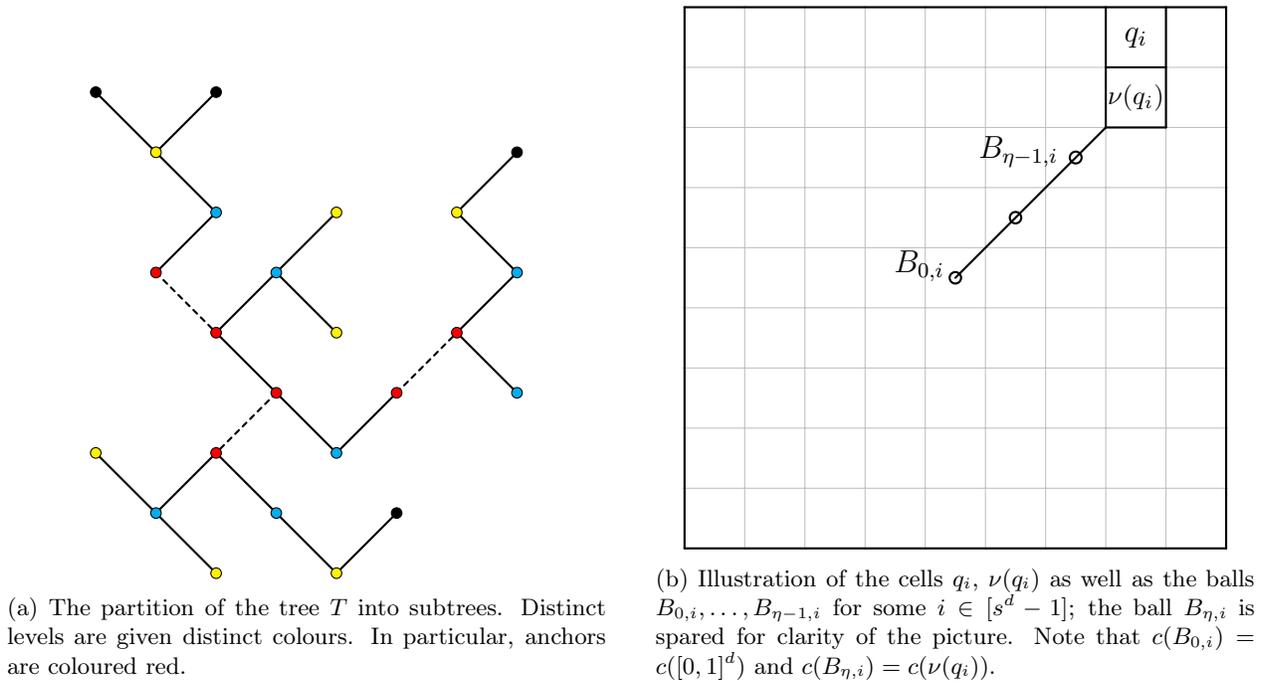
\begin{figure}
\centering
\begin{subfigure}[b]{0.48\textwidth}
\centering
\begin{tikzpicture}[scale=0.8,line cap=round,line join=round,x=1cm,y=1cm]
\clip(-7.2,-5.1) rectangle (0.2,3.1);
\draw [line width=0.8pt] (-7,3)-- (-6,2);
\draw [line width=0.8pt] (-6,2)-- (-5,3);
\draw [line width=0.8pt] (-6,2)-- (-5,1);
\draw [line width=0.8pt] (-5,1)-- (-6,0);
\draw [line width=0.8pt,dash pattern=on 2pt off 2pt] (-6,0)-- (-5,-1);
\draw [line width=0.8pt] (-5,-1)-- (-4,0);
\draw [line width=0.8pt] (-4,0)-- (-3,1);
\draw [line width=0.8pt] (-4,0)-- (-3,-1);
\draw [line width=0.8pt] (-5,-1)-- (-4,-2);
\draw [line width=0.8pt,dash pattern=on 2pt off 2pt] (-4,-2)-- (-5,-3);
\draw [line width=0.8pt] (-5,-3)-- (-6,-4);
\draw [line width=0.8pt] (-6,-4)-- (-5,-5);
\draw [line width=0.8pt] (-6,-4)-- (-7,-3);
\draw [line width=0.8pt] (-5,-3)-- (-4,-4);
\draw [line width=0.8pt] (-4,-4)-- (-3,-5);
\draw [line width=0.8pt] (-3,-5)-- (-2,-4);
\draw [line width=0.8pt] (-4,-2)-- (-3,-3);
\draw [line width=0.8pt] (-3,-3)-- (-2,-2);
\draw [line width=0.8pt,dash pattern=on 2pt off 2pt] (-2,-2)-- (-1,-1);
\draw [line width=0.8pt] (-1,-1)-- (0,-2);
\draw [line width=0.8pt] (-1,-1)-- (0,0);
\draw [line width=0.8pt] (0,0)-- (-1,1);
\draw [line width=0.8pt] (-1,1)-- (0,2);
\begin{scriptsize}
\draw [fill=black] (-7,3) circle (2.5pt);
\draw [fill=yellow] (-6,2) circle (2.5pt);
\draw [fill=black] (-5,3) circle (2.5pt);
\draw [fill=cyan] (-5,1) circle (2.5pt);
\draw [fill=red] (-6,0) circle (2.5pt);
\draw [fill=red] (-5,-1) circle (2.5pt);
\draw [fill=cyan] (-4,0) circle (2.5pt);
\draw [fill=yellow] (-3,1) circle (2.5pt);
\draw [fill=yellow] (-3,-1) circle (2.5pt);
\draw [fill=red] (-4,-2) circle (2.5pt);
\draw [fill=red] (-5,-3) circle (2.5pt);
\draw [fill=cyan] (-6,-4) circle (2.5pt);
\draw [fill=yellow] (-5,-5) circle (2.5pt);
\draw [fill=yellow] (-7,-3) circle (2.5pt);
\draw [fill=cyan] (-4,-4) circle (2.5pt);
\draw [fill=yellow] (-3,-5) circle (2.5pt);
\draw [fill=black] (-2,-4) circle (2.5pt);
\draw [fill=cyan] (-3,-3) circle (2.5pt);
\draw [fill=red] (-2,-2) circle (2.5pt);
\draw [fill=red] (-1,-1) circle (2.5pt);
\draw [fill=cyan] (0,-2) circle (2.5pt);
\draw [fill=cyan] (0,0) circle (2.5pt);
\draw [fill=yellow] (-1,1) circle (2.5pt);
\draw [fill=black] (0,2) circle (2.5pt);
\end{scriptsize}
\end{tikzpicture}
\caption{The partition of the tree $T$ into subtrees. Distinct levels are given distinct colours. In particular, anchors are coloured red.}
\label{fig:1a}
\end{subfigure}
\hfill
\begin{subfigure}[b]{0.48\textwidth}
\centering
\definecolor{cqcqcq}{rgb}{0.7529411764705882,0.7529411764705882,0.7529411764705882}
\begin{tikzpicture}[scale=0.8,line cap=round,line join=round,x=1cm,y=1cm]
\draw [color=cqcqcq, xstep=1cm,ystep=1cm] (-9.02,-4.02) grid (0.02,5.02);
\clip(-9.02,-4.02) rectangle (0.02,5.02);
\draw [line width=0.8pt] (-9,5)-- (-9,-4);
\draw [line width=0.8pt] (-9,-4)-- (0,-4);
\draw [line width=0.8pt] (0,-4)-- (0,5);
\draw [line width=0.8pt] (0,5)-- (-9,5);
\draw [line width=0.8pt] (-2,5)-- (-2,3);
\draw [line width=0.8pt] (-1,3)-- (-1,5);
\draw [line width=0.8pt] (-1,4)-- (-2,4);

\draw (-1.5,3.5) node {$\nu(q_i)$};
\draw (-1.5,4.5) node {\large{$q_i$}};

\draw (-5.1,0.7) node {\large{$B_{0,i}$}};
\draw (-4.9+1.45,1+1.6) node {\large{$B_{\eta-1,i}$}};

\draw [line width=0.8pt, opacity=0.3] (-4.5,0.5)-- (-2,3);
\draw [line width=0.8pt] (-4.5,0.5) circle (0.09527791532077659cm);
\draw [line width=0.8pt] (-3.5,1.5) circle (0.09527791532077659cm);
\draw [line width=0.8pt] (-2.5,2.5) circle (0.09850666049526213cm);
\draw [line width=0.8pt] (-2,3)-- (-1,3);
\end{tikzpicture}
\caption{Illustration of the cells $q_i$, $\nu(q_i)$ as well as the balls $B_{0,i},\ldots,B_{\eta-1,i}$ for some $i\in [s^d-1]$; the ball $B_{\eta,i}$ is spared for clarity of the picture.
Note that $c(B_{0,i})=c([0,1]^d)$ and $c(B_{\eta,i})=c(\nu(q_i))$.}
\label{fig:1b}
\end{subfigure}
\caption{Illustration from the proof of \Cref{thm:1}.}
\end{figure}

Let us first present the broad strategy of the proof of  the upper bound in \Cref{thm:1}. We embed a tree $T\in \mathcal{T}(n,\Delta)$ into $[0,1]^d$ as follows. We tessellate $[0,1]^d$ into congruent hypercubic cells of side length $\zeta r_c$ for a small constant $\zeta = \zeta(d)$, allowing points in the same cell and in adjacent cells to be connected. This creates a favourable local clustering structure. 
Furthermore, we partition $T$ into subtrees according to \Cref{lem:divide}. Note that \Cref{lem:divide} ensures that the sizes of these trees are comparable: we choose these sizes in a way guaranteeing that 
\begin{itemize}
    \item each subtree is sufficiently large so that, if we root it near the centre of $[0,1]^d$, then it can `stretch' and reach any point of $[0,1]^d$ and
    \item the expected number of points in each cell of the tessellation is multiple times larger than the size of any of the subtrees.
\end{itemize}

We call the endvertices of the edges in $T$ connecting different subtrees \emph{anchors}. For every subtree $R$, the vertices of $R$ are divided into \emph{levels} according to the distance of the vertex from its nearest anchor in $V(R)$. 
In particular, the anchors of $R$ belong to level $0$. This layering resembles that of a rooted tree but we may have multiple roots (see \Cref{fig:1a} for an illustration).

Our objective is then to use these subtrees to systematically populate the cells. To this end, we embed the trees one by one, each time aiming to embed the bulk of its vertices into a target cell that we choose for this tree. We prioritise cells which are further from the centre of $[0,1]^d$ since these are more difficult to reach from the anchors. In particular, cells near the corners of $[0,1]^d$ are the first to be fully populated. 

We embed a subtree $R$ and populate a target cell $q$ as follows.  First, we embed all the anchors in $V(R)$ near the centre of $[0,1]^d$. 
Then, we embed the vertices in the initial levels into points in designated, well-spaced balls that lie on a segment from centre of $[0,1]^d$ to a point near the target cell $q$. Having reached very close to this target cell, we embed the remaining vertices of $R$ into points in $q$. If there are not enough points in $q$ for this step, then we embed the surplus of vertices into an adjacent cell (see \Cref{fig:1b} for illustration, where the target cell depicted is $q=q_i$).

In what follows, we fix $r_c = r_c(n,d, \Delta)$ as defined in~\eqref{eq:rc} and assume throughout that $n$ is sufficiently large. Set
\begin{align}\label{eqn:epsilon}
    \eps = \eps(n,d,\Delta) := \frac{100 d\log(\Delta\log n)}{\log n} \quad\text{and}\quad r  \geq   (1+\eps)r_c.
\end{align}

\begin{proof}[Proof of the upper bound in \Cref{thm:1}]
Fix $n$ suitably large.
Let $\cQ$ be the tessellation of $[0,1]^d$ into $s^d$ congruent (closed) hypercubic cells, where $s$ is an odd integer such that 
\begin{align*}
    \frac{ \sqrt{ d } }{ s } \in \left[\frac{1}{3}\left(1+\frac{\eps}{2}\right)r_c, \frac{1}{2}\left(1+\frac{2\eps}{3}\right)r_c\right].
\end{align*}
While some cells in $\cQ$ intersect along $k$-dimensional faces for some $k\le d-1$, almost surely none of the $n$ embedded points lands on the boundary between two cells. For this reason, we systematically assume that the cells induce a partition of the point set.

We first collect some notation regarding the tessellation which will be of use during the embedding.
We order the cells of $\cQ$ as a sequence $q_1, \dots, q_{s^d}$ so that the distance from the centre of each cell to $c ([0, 1] ^ d)$ is non-increasing. In this ordering, $q_{s^d}$ denotes the central cell, that is, the unique cell that contains the point $(1/2,\ldots, 1/2)$. 
For every cell $q_i \neq q_{s^d}$, we let $j_i$ be minimum index such that the $j_i$-th coordinates of $c(q_i)$ and $c(q_{s^d})$ disagree. Then, we define the \emph{adjacent successor of $q_i$} to be the cell $\nu(q_i)\in \cQ$ whose centre $c(\nu(q_i))$ differs from $c(q_i)$ in the $j_i$-th coordinates only, $\|c(\nu(q_i))-c(q_i)\|=1/s$ and $c(\nu(q_i))$ is closer to $c([0,1]^d)$ than $c(q_i)$. 
Thus, $q_i$ precedes $\nu(q_i)$ in the ordering $q_1, \dots, q_{s^d}$, and 
\begin{align}\label{prop:distance_among_cells}
\text{for every pair of points $x\in q_i$ and $y\in \nu(q_i)$, we have that $\|x-y\|\leq \sqrt{\frac{4}{s^2}+\frac{d-1}{s^2}} \leq \frac{2\sqrt{d}}{s}\le r$.}
\end{align}

Fix $\eta:= \left\lceil s/4\right\rceil$. Note that $\Delta=n^{o(1)}$ implies that $\eps=o(1)$ and, in extension, that $\sqrt{d}/2 \le \eta \cdot  (1-\eps/4)$.
For every $j\in \{0,...,\eta\}$, denote by $B_{j,i}'$ the ball of radius $\eps/(10s)$ centred at 
\[c ([0,1] ^ d) + (j/\eta)\cdot\left(c (\nu(q_i)) - c ([0,1] ^ d)\right).\] 
Note that, for every $i,j$ as above, $B_{j,i}'$ may intersect up to $2^d$ cells. Hence, at least one of these cells contains a ball of radius $2^{-d}\eps/ (10s)$, say, which is included in $B_{i,j}'$ and whose centre lies on the segment from  $c ([0,1] ^ d)$ to $c (\nu(q_i))$. We let $B_{j,i}$ be such a ball. (See Figure \ref{fig:1b} for illustration).

Note that the balls $B_{0, i}, B_{1, i}, \dots, B_{\eta, i}\subseteq [0,1]^d$ satisfy the following properties:
\begin{enumerate}[\textbf{P1}]
    \item \label{prop:balls_in_cells_after}      
    $\bigcup_{j = 0}^{\eta} B_{j, i} \subseteq \bigcup_{j = i+1}^{s ^ d}q_j$;
\end{enumerate}
\begin{enumerate}[\textbf{P2}]
    \item The balls $B_{0, i}$ and $B_{\eta,i}$ are contained in $q_{s^d}$ and $\nu(q_i)$, respectively;
    \label{prop:1st_last_balls_location}
    \end{enumerate}
\begin{enumerate}[\textbf{P3}]
    \item For every $j\in [0,\eta-1]$ and every pair of points $x\in B_{j,i}$ and $y\in B_{j+1,i}$, we have that $\|x-y\|\leq r$. \label{prop:distance_among_balls}
\end{enumerate}
Properties \ref{prop:1st_last_balls_location} and \ref{prop:distance_among_balls} are immediate by construction, and the observation that  $\sqrt{d}/2 \le \eta \cdot  (1-\eps/4)$ implies that for every point $v\in [0,1]^d$ we have that $(\|c ([0,1] ^ d)-v\|)/\eta \leq (1-\eps/4) r$. As for Property \ref{prop:balls_in_cells_after}, let $q_j$ be the cell containing $B_{j,i}$. If $q_j\neq \nu(q_i)$, then, as $q_j$ contains the centre of $B_{j,i}$, $\|c(q_j)-c(B_{j,i})\|\le \|c(\nu(q_i))-c(B_{j,i})\|$. As both $c(B_{j,i})$ and $c (\nu(q_i))$ belong to the segment from $c ([0,1] ^ d)$ to $c (\nu(q_i))$, by the triangle inequality
\begin{align*}
\|c([0,1]^d)-c(q_j)\|&\le \|c([0,1]^d)-c(B_{j,i})\|+\|c(B_{j,i})-c(q_j)\|\\
&\le \|c([0,1]^d)-c(B_{j,i})\|+\|c(B_{j,i})-c(\nu(q_i))\|=\|c([0,1]^d)-c(\nu(q_i))\|.
\end{align*}
Hence, $q_j$ succeeds $\nu(q_i)$ in the ordering $q_1,\dots, q_{s^d}$, which implies property \ref{prop:balls_in_cells_after}. 

We generate $G_d(n,r)$ and colour each of its points independently, in blue with probability $1/2$ and in red otherwise.
Let $\mathcal{A}$ be the event that each of the following properties are satisfied.
\begin{enumerate}[\textbf{A1}]
    \item \label{A1} 
    For every $i \in [s^d - 1]$ and $j \in [\eta]$, the ball $B_{j, i}$ contains at least $\vol(B_{j,i})n/4\ge d^{-d/2} (\eps/(2^d\cdot 10 s))^d\cdot n/4$ red points, where we used that the volume of a unit $d$-dimensional ball is at least $d^{-d/2}$.
\end{enumerate}
\begin{enumerate}[\textbf{A2}]
    \item \label{A2} 
    For every $i \in [s^d]$,  the cell $q_i$ contains at least $\frac{ 3 }{ 8 } s ^ {- d} n $ blue points.
\end{enumerate}

Note that, by \Cref{lem:EquiDistributionOfPoints} and a union bound, properties \ref{A1} and \ref{A2} hold jointly with high probability. In the remainder of the proof, we show that, conditionally on the event $\mathcal{A}$, $G_d (n, r)$ contains all trees on $n$ vertices with maximum degree at most $\Delta = \Delta(n)$. 

Fix any tree $T\in \mathcal{T}(n,\Delta)$. Applying \Cref{lem:divide} with $m = m(n) := s ^ {- d} n / (8 d)$ and uniform vertex weight function $w \equiv 1$ on $V(T)$, we obtain a set of subtrees $T_1,...,T_k$ of $T$, such that $V(T_1),...,V(T_k)$ partitions $V(T)$ and each subtree has between $m_0 = m/(\Delta+1)$ and $m$ vertices.

We call a vertex in $V(T)$ an \emph{anchor} if it is incident to one of the $k-1$ edges in $E(T)\setminus \left(\bigcup_{\ell=1}^k E(T_\ell)\right)$. Thus, the total number of anchors is  at most  $2 k - 2$. For a vertex $v \in V (T_j)$, define its \emph{level} to be the minimum distance from $v$ to an anchor of $T_j$. (See Figure \ref{fig:1a} for illustration).

To embed $T$ in $G(n,r)$, we execute the embedding algorithm given below. While executing the embedding algorithm, we call a point of $G(n,r)$ \textit{occupied} if we have already embedded some vertex of $T$ into it.  If at any step of the algorithm there are not enough unoccupied points for it to embed a prescribed set of vertices, then the algorithm terminates and returns FAILURE; we will argue that, conditionally on $\cA$, this does not happen. Else, the algorithm terminates upon running the for-loop for $k$ iterations.   
\vspace{3mm}
\\\noindent \textbf{Embedding Algorithm:} For $t=1,...,k$, do the following.
Let $i_t$ be the minimal index such that there is an unoccupied point in $q_{i_t}$. If $i_t = s^d$, then embed all vertices of the tree $T_t$ into points in the cell $q_{s^d}$. Else, execute Step 1 and then Step 2.

\vspace{3mm}
\noindent    
\textbf{Step 1.} 
 For $j=0,...,\eta$, embed all vertices at layer $j$ of $T_t$ into red points in $B_{j,i_t}$.

\vspace{3mm}
\noindent   
\textbf{Step 2.}  Embed the rest of the  vertices in $q_{i_t}\cup \nu(q_{i_t})$ by using first the unoccupied points in $q_{i_t}$, and then, if needed, any blue points in $\nu(q_{i_t})$. 

\vspace{3mm}

Note that, if $uv\in E(T)$ and both vertices are embedded into points $p(u),p(v)\in V(G_d(n,r))$ before the algorithm terminates, then one of the following holds. 
\begin{itemize}
    \item $u,v$ belong to distinct subtrees. Then, each of $u$ and $v$ is an anchors. In this case, \ref{prop:1st_last_balls_location} ensures that $p(u),p(v)\in \big(\bigcup_{t=1}^k B_{0,t}\big)\cup q_{s^d}\subseteq q_{s^d}$.
    \item $u,v$ belong to some tree $T_t$ with $t\in [k]$, and $i_t=q_{s^d}$. Then, each of $u$ and $v$ is embedded into a point in $q_{s^d}$. Thus, $\|p(u)-p(v)\|\leq r$.
    \item $u,v$ belong to some tree $T_t$ with $t\in [k]$, and $i_t\neq q_{s^d}$. Furthermore, there exists $j\geq 0$ such that either each of $u$ and $v$ belongs to level $j$ of $T_t$, or one of them belongs to level $j$ and the other one to level $j+1$ of $T_t$. 
    \begin{itemize}
        \item If $j<\eta$, then $p(v),p(u)\in B_{j,i}\cup B_{j+1,i}$. In particular, $\|p(u)-p(v)\|\leq r$, by \ref{prop:distance_among_balls}.
        \item Else $p(u),p(v)\in B_{\eta,i}\cup q_i\cup \nu(q_i)= q_i\cup \nu(q_i)$, where the equality holds by \ref{prop:1st_last_balls_location}. Then, $\|p(u)-p(v)\|\leq r$ by \eqref{prop:distance_among_cells}.
    \end{itemize}
\end{itemize}
In all cases, the inequality $\|p(u)-p(v)\|\leq r$ holds and, therefore, $p(u)p(v) \in E(G_d(n,r))$. 

It remains to prove that, conditionally on the event $\mathcal{A}$, the embedding algorithm does not return FAILURE and, as a result, embeds $T$ in $G_{d}(n,r)$. 
By recalling that the number of vertices of $T$ equals the number of points, the only (potential) reasons for FAILURE are analysed in the following two cases.  

\vspace{0.5em}
\noindent
\textbf{Case 1:} There exist $t\in [k]$ and $j\in \{0,\ldots, \eta\}$ such that, at the $t$-th iteration of the for-loop, the number of vertices at level $j$ of $T_t$ is strictly larger than the number of unoccupied red points left in $B_{j,i_t}$.
\vspace{3mm}
\\ 
\noindent
Then, \ref{prop:balls_in_cells_after} implies that, during the first $t$ iterations, the embedding algorithm will embed into red points in $B_{j,i_t}$ only vertices which are at some level $j'$ of some tree $T_{t'}$ with $j'\leq \eta$ and $t' \leq t$. 
Every such vertex is at graph distance (in $T$) at most $\eta$ from an anchor of $T$, that is, at level at most $\eta$. We denote the number of these vertices by $b$. Recall that $r_c=(\sqrt{d}\log(\Delta-1))/(2\log n) $. First,
\[k\leq n/m_0 = 8d(\Delta +1)s^d \leq 8d(\Delta +1) \left( \frac{2\sqrt{d}}{\left(1+\frac{\eps}{3}\right)r_c}\right)^d \leq 8d(\Delta +1) \left(\frac{4\log n}{\left(1+\frac{\eps}{3}\right)\log(\Delta-1)}\right)^d \leq \Delta (\log n)^{2d}.\]
Thereafter, since $T$ has maximum degree $\Delta$, and at most $2k-2$ anchors,
\begin{align*}
    b&\le (2k-2)\sum_{i=0}^{\eta}(\Delta-1)^{i} \le 2(2k-2)(\Delta-1)^\eta\le 4\Delta(\Delta-1)^{\eta}(\log n)^{2d}.
\end{align*}
Recalling that $\eta=  \left\lceil s/4\right\rceil$ 
and $s\le \frac{2\sqrt{d}}{(1+\eps/3)r_c}$, we have
\begin{align*}
    b&\le 4\Delta^2 (\Delta-1)^{s/4} (\log n)^{2d} \le 4\Delta^2 (\Delta-1)^{\sqrt{d}/(2(1+\eps/3)r_c)}(\log n)^{2d} =4\Delta^2 n^{1/(1+\eps/3)}(\log n)^{2d}.
\end{align*}
Furthermore, \eqref{eqn:epsilon} implies that $n^{\eps}=\Delta^{100d}(\log n)^{100d}$, and the condition $\Delta=n^{o(1)}$ implies that $\epsilon=o(1)$. Therefore, 
\begin{align*}
    b&\le 4\Delta^2 n^{1-\eps/4}(\log n)^{2d} = 4\Delta^2 n \cdot\Delta^{-25d}(\log n)^{-25d} (\log n)^{2d} \leq  \Delta^{-25d}(\log n)^{-25d}\cdot n.
\end{align*}
Since 
\begin{align*}
    \frac{1}{d^{d/2}}\left(\frac{\eps}{2^d\cdot 10s}\right)^d\cdot \frac{n}{4}&\ge\left(\frac{\eps\cdot r_c}{2^d\cdot 10 d}\right)^d\cdot \frac{n}{4} \ge \left(\log n\right)^{-2d}\cdot \frac{n}{4}> \Delta^{-25d}(\log n)^{-25d}\cdot n \geq b,
\end{align*}
the impossibility to perform the embedding step in the ball $B_{j,i_t}$ implies that the event \ref{A1} does not occur, leading to a contradiction.

\vspace{0.5em}
\noindent
\textbf{Case 2:} there exists $t\in [k]$ such that, at iteration $t$, the number of vertices of $T_t$ at distance at least $\eta+1$ from the anchors in $T_t$ is strictly larger than the union of the still unoccupied points in $q_{i_t}$ and the blue points in $\nu(q_{i_t})$.
\vspace{3mm}
\\ \noindent
Note that, during the first $t$ iterations, the only vertices embedded into blue points in $\nu(q_{i_t})$ by the embedding algorithm are ones which were not fully embedded in a cell $q_{t'}$ with $t' < t$ with adjacent successor of $\nu(q_{i_{t'}})=\nu(q_{i_t})$.
In particular, since every cell is the adjacent successor of at most $2d$ cells, there are at most $2d-1$ such trees. 
Thus, if Case~2 occurs, then the number of blue points in $\nu(q_i)$ is at most $(2d-1) m + m = s^{-d}n/4$, thus contradicting~\ref{A2}.
This concludes the proof of \Cref{thm:1}.
\end{proof}

\section{Conclusion}\label{sec: conc}

In this note, we establish a sharp threshold for the property of the random geometric graph $G_d(n,r)$ being $\mathcal{T}(n,\Delta)$-universal with $\Delta\ge 3$.
\Cref{thm:1} handles the random geometric graph defined on the unit hypercube $[0,1]^d$ but, with relatively minor modifications, our approach may be adapted to other natural ambient spaces such as the unit $d$-dimensional torus, the unit $d$-dimensional ball or sphere. 
Such modifications include, in particular, adjusting the leading constant in $r_c$ defined in~\eqref{eq:rc}.

While \Cref{thm:1} provides bounds for the critical window where the probability of the said property transitions from $o(1)$ to $1-o(1)$, it would be interesting to analyse this phase transition more closely.
More ambitiously, one could try to determine a simple hitting time criterion which is asymptotically equivalent to the studied universality phenomenon.

Another related direction of study concerns the threshold for containment of \emph{almost} all $n$-vertex trees in the random geometric graph. The answer to this question follows from known results when $d=1$, as shown in the following proposition.

\begin{proposition}\label{prop:1}
Fix a uniform random tree $T$ on $n$ vertices.
If $r = o(n^{-1/2})$, with high probability $G_1(n,r)$ contains no copy of $T$. 
If $r = \omega(n^{-1/2})$, with high probability $G_1(n,r)$ contains a copy of $T$. 
\end{proposition}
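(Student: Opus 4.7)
The threshold $r \sim n^{-1/2}$ corresponds to the scale at which the graph-diameter $\Theta(1/r)$ of a connected $G_1(n,r)$ matches the typical diameter $\Theta(\sqrt{n})$ of a uniform random tree $T$, a classical fact due to R\'enyi and Szekeres. Both directions are driven by this matching of scales.

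For the lower bound, the plan is a diameter argument. If $r = O((\log n)/n)$, then $G_1(n,r)$ is disconnected with high probability and therefore contains no spanning tree. Otherwise, with high probability the leftmost and rightmost points of $G_1(n,r)$ lie within $o(1)$ of $0$ and $1$, so any path between them in $G_1(n,r)$ uses at least $(1 - o(1))/r = \omega(\sqrt{n})$ edges, forcing $\diam(G_1(n,r)) = \omega(\sqrt{n})$. Any spanning subgraph would have to satisfy $\diam(G_1(n,r)) \le \diam(T) = O(\sqrt{n})$, yielding a contradiction.

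For the upper bound, the plan is to identify a rich substructure of $G_1(n,r)$ into which $T$ embeds directly. Ordering the $n$ points of $G_1(n,r)$ by their position on $[0,1]$, a Chernoff estimate applied to windows of length $r/2$ yields that, with high probability, every point is adjacent to the $k := \lfloor nr/4 \rfloor$ points neighbouring it in the sorted order; equivalently, $G_1(n,r)$ contains the $k$-th power of a Hamilton path as a subgraph, with $k = \omega(\sqrt{n})$. It then suffices to show that $T$ has bandwidth at most $k$. To this end I would use a BFS ordering of $T$ from an arbitrary root: two adjacent vertices lie in consecutive BFS levels, so their indices differ by at most the sum of the sizes of two consecutive levels. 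Concentration of the profile of uniform random trees, consistent with their convergence (after rescaling) to Aldous' Continuum Random Tree, yields maximum level size $O(\sqrt{n})$ with high probability, and hence bandwidth $O(\sqrt{n}) = o(nr)$.

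The main obstacle lies in the tail estimate for the BFS profile of $T$: one needs a uniform bound on the maximum level size that does not merely follow from the distributional limit of the rescaled profile. I would handle this via moment estimates on level sizes in critical Galton--Watson trees conditioned to have total progeny $n$, in the spirit of the classical generating-function arguments of Flajolet--Odlyzko and Devroye.
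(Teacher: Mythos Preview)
Your proposal is correct and takes essentially the same approach as the paper: the lower bound is the identical diameter comparison, and the upper bound via bandwidth $O(\sqrt{n})$ together with the $k$-th power of a Hamilton path is a clean rephrasing of the paper's greedy BFS embedding into the left-to-right sorted points of $G_1(n,r)$, both hinging on the width of $T$ being $O(\sqrt{n})$ with high probability. The tail estimate on the width that you flag as the main obstacle is precisely the ingredient the paper invokes (citing Drmota's book), and your proposed route via conditioned Galton--Watson trees is a standard way to obtain it.
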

\begin{proof}[Sketch of proof.]
Root the tree $T$ at a random vertex. Then, by \cite[Section~4.2.7]{Dra09}, for every $\eps\in (0,1]$ and suitably large $n$, there exists $\delta = \delta(\eps)\in (0,1]$ such that, with probability at least $1-\eps$, the height of $T$ lies in the interval $[\delta \sqrt{n}, \delta^{-1}\sqrt{n}]$.
The first statement in the proposition follows from the comparison of the diameters of $T$ and $G_1(n,r)$ as in the proof of the lower bound in \Cref{thm:1}.

At the same time, recall that the \emph{width} of a rooted tree $T$ is the maximum over $t\ge 0$ of the number of vertices at distance $t$ from the root.
Then, by \cite[Section~4.2]{Dra09}, for every $\eps\in (0,1]$ and suitably large $n$, there is $\delta = \delta(\eps)\in (0,1]$ such that, with probability at least $1-\eps$, $T$ has width in the interval $[\delta \sqrt{n}, \delta^{-1}\sqrt{n}]$.
Thus, when $r = \omega(n^{-1/2})$, the latter observation and the equi-distribution of the vertices of $G_1(n,r)$ (see \Cref{lem:EquiDistributionOfPoints}) ensure that the tree $T$ can be embedded in $G_1(n,r)$ by exploring $T$ according to a breadth-first search and greedily embedding every new vertex of $T$ at the left-most vertex of $G_1(n,r)$ which is still unoccupied. This implies the second part of the proposition.
\end{proof}

We note that the embedding strategy from \Cref{prop:1} can be adapted to $G_d(n,r)$ when $r = \omega(n^{-1/2d})$ by tessellating $[0,1]^d$ into hypercubes of side length $s$ with $\omega(n^{-1/2d}) = s = o(r)$ and embedding the random tree greedily along a path of consecutive hypercubes.
While this strategy already vastly improves over the threshold in \Cref{thm:1} (where we use that $\Delta(T) = O(\log n)$ for a uniformly random tree $T$, see e.g.\ \cite[Section~3.2.1]{Dra09}), the obstruction given by the diameter of $T$ when $d\ge 2$ becomes weaker than the obstruction given by the absence of isolated vertices.
Thus, we believe that the threshold for embedding a random tree (and, as a consequence, almost all trees) on $n$ vertices in $G_d(n,r)$ may coincide with the threshold for connectivity.
Confirming or refuting this statement remains an intriguing open problem.

\vspace{0.5em}

We also observe some extensions of \Cref{thm:1} for more general graph classes. For each of them, the only component of the proof requiring modifications is \Cref{lem:divide}.
First, the exhibited sharp threshold result can be transferred to $n$-vertex graphs $G$ with maximum degree at most $\Delta = n^{o(1)}$ and tree-width $t = t(n) = n^{o(1)}$ by replacing \Cref{lem:divide} with (an iterative application of) a result of Robertson and Seymour~\cite{RS86}\footnote{While this work may be hard to access freely, an explanation of the result may be found in~\cite{DN19}.} ensuring that every such graph $G$ contains disjoint subgraphs $G_1,G_2$ not connected by an edge, with at most $2n/3$ vertices each and such that $G\setminus (G_1\cup G_2)$ contains at most $t+1$ vertices.
Second, for fixed $\eps > 0$ and any $n$-vertex graph $G$ with genus $g = g(n) \le n^{1-2\eps}$, Gilbert, Hutchinson and Tarjan~\cite{GHT84} showed that $G$ can be divided into subgraphs $G_1,G_2$ as above where $G\setminus (G_1\cup G_2)$ contains $O(\sqrt{gn})$ vertices. 
While the latter result does not translate into a sharp threshold using our approach since the number of anchors can be as large as $n^{1-\eps+o(1)}$, our approach ensures that all graphs $G$ with maximum degree at most $\Delta = n^{o(1)}$ and genus at most $g$ are simultaneously contained in $G_d(n,r)$ when $r$ is slightly above $r_c/\eps$ with high probability.
Understanding if the universality problem for this family of graphs admits a sharp threshold remains an interesting direction for further research.

\section*{Acknowledgement}
The authors wish to thank Michael Krivelevich for his remarks on an earlier version of the paper. This research was funded in part by the Austrian Science Fund (FWF) grants No. 10.55776/ESP624 and No. 10.55776/ESP3863424. 
Part of this work was conducted while the second and fifth authors were visiting TU Wien and they thank the institution for its hospitality. 
For open access purposes, the authors have applied a CC BY public copyright license to any author accepted manuscript version arising from this submission. 

\bibliographystyle{abbrv}
\bibliography{Bib}

\end{document}